\sloppy \pagestyle{plain}
\theoremstyle{definition}
\newtheorem{theorem}[equation]{Theorem}
\newtheorem*{theorem*}{Theorem}
\newtheorem{lemma}[equation]{Lemma}
\newtheorem{corollary}[equation]{Corollary}
\newtheorem{example}[equation]{Example}
\newtheorem*{definition*}{Definition}
\newtheorem{question}[equation]{Question}
\theoremstyle{remark}
\makeatletter\@addtoreset{equation}{section}
\makeatletter\@addtoreset{section}{part}
\def \OO {\mathcal{O}}
\def \P {\mathbb{P}}
\def \C {\mathbb{C}}
\def \Z {\mathbb{Z}}
\def \GL {\mathrm{GL}\,}
\def \PGL {\mathrm{PGL}\,}
\def \SL {\mathrm{SL}\,}
\def \Sym {\mathrm{Sym}}
\def \MM {\mathrm{M}}
\def \J {\mathrm{J}}
\def \HaJ {\mathrm{J}_2}
\def \Suz {\mathrm{Suz}}
\def \HS {\mathrm{HS}}
\def \Co {\mathrm{Co}}
\def \McL {\mathrm{McL}}
\def \Fi {\mathrm{Fi}}
\def \He {\mathrm{He}}
\def \HN {\mathrm{HN}}
\def \Th {\mathrm{Th}}
\def \B {\mathrm{B}}
\def \M {\mathrm{M}}
\def \T {\mathrm{T}}
\def \Ru {\mathrm{Ru}}
\def \Ly {\mathrm{Ly}}
\def \ON {\mathrm{O}\mbox{'}\mathrm{N}}
\def \ge {\geqslant}
\def \le {\leqslant}
\author{Ivan Cheltsov and Constantin Shramov}
\title{Sporadic simple groups and quotient singularities}
\address{University of Edinburgh, Edinburgh EH9 3JZ, UK}
\address{Steklov Institute of Mathematics, Moscow 119991, Russia}
\address{\texttt{I.Cheltsov@ed.ac.uk}\ \ \texttt{shramov@mccme.ru}}
\dedicatory{to Igor Rostislavovich Shafarevich with deep respect}
\begin{document}

\begin{abstract}
We show that the only sporadic simple group such that 
some faithful representation of some of its
stem extensions gives rise to  exceptional (weakly-exceptional but
not exceptional, respectively) quotient singularities is the
Hall--Janko group (the Suzuki group, respectively).
\end{abstract}

\maketitle

\section{Introduction}
\label{section:intro}

Finite subgroups in $\SL_2(\C)$ have been classified more than a hundred 
years ago. The quotients of~$\C^2$ by these groups are
$\mathbb{A}$-$\mathbb{D}$-$\mathbb{E}$ singularities, which are
also known by other names (Kleinian singularities, Du Val
singularities, rational surface double points, two-dimensional
canonical singularities etc). Shokurov suggested a higher
dimensional generalization of the singularities of type
$\mathbb{E}$ and of both types $\mathbb{D}$ and $\mathbb{E}$. He
called them exceptional and weakly-exceptional, respectively. The
precise definitions of exceptional and weakly-exceptional
singularities are quite technical (see
\cite[Definition~1.5]{Sho00} and \cite[Definition~4.1]{Pr98plt},
respectively). Surprisingly, they are connected with a wide range
of algebraic and geometric questions.

It turned out that exceptional and weakly-exceptional
singularities are related to the Calabi problem for orbifolds with
positive first Chern class (see \cite{ChSh09}).

\begin{example}
\label{example:intro-quasi-homogeneous} Let $(V\ni O)$ be a germ
of three-dimensional isolated quasihomogeneous hypersurface
singularity that is given by
$$
\phi\big(x,y,z,t\big)=0\subset\mathbb{C}^{4}\cong\mathrm{Spec}\Big(\mathbb{C}\big[x,y,z,t\big]\Big),
$$
where $\phi(x,y,z,t)$ is a quasihomogeneous polynomial of degree
$d$ with respect to some weights $\mathrm{wt}(x)=a_{0}$,
$\mathrm{wt}(y)=a_{1}$, $\mathrm{wt}(z)=a_{2}$,
$\mathrm{wt}(t)=a_{3}$ such that $a_0\leqslant a_1\leqslant
a_2\leqslant a_3$ and $\mathrm{gcd}(a_{0},a_{1},a_{2},a_{3})=1$.
Let $S$ be a~weighted hypersurface in
$\mathbb{P}(a_{0},a_{1},a_{2},a_{3})$ of degree $d$ that is given
by the same equation $\phi(x,y,z,t)=0$. Suppose, in addition, that
$\sum_{i=0}^{n}a_{i}>d$, and  $S$ is well-formed (see
\cite[Definition~6.9]{IF00}). Then $S$ is a Del Pezzo surface with
at most quotient singularities. Moreover, if $(V\ni O)$ is either
exceptional or weakly-exceptional, then $S$ admits an orbifold
Kahler--Einsten metric (this follows, for example, from
\cite{Ti87}, \cite[Theorem~4.9]{Pr98plt},
\cite[Theorem~2.1]{Kud01}, and  \cite[Theorem~A.3]{ChSh08c}).
\end{example}

Many old and still open group-theoretic questions have
algebro-geometric counterparts related to the exceptionality of
quotient singularities (see, for example, \cite{Th81} and
\cite[Conjecture~1.25]{ChSh09}). It seems that the study of
exceptionality and weak-exceptionality of quotient singularities
may shed new light on some group-theoretic problems.

\begin{example}
\label{example:exceptional-quotient} Let $G$ be a finite subgroup
in $\GL_{n+1}(\C)$ that does not contain
reflections\footnote{Recall that an element $g\in\GL_{n+1}(\C)$ is called a
\emph{reflection} (or sometimes a \emph{quasi-reflection}) if it
has exactly one eigenvalue that is different from $1$.}, and let
$G^{\prime}$ be a finite subgroup in $\GL_{n+1}(\C)$ that does not
contain reflections such that $G^{\prime}$ and $G$ has the same
image in $\PGL_{n+1}(\C)$. Then it follows from
\cite[Theorem~3.15]{ChSh09} and \cite[Theorem~3.16]{ChSh09} that
the singularity $\C^{n+1}\slash G$ is exceptional
(weakly-exceptional, respectively) if and only if the singularity
$\C^{n+1}\slash G^{\prime}$ is exceptional (weakly-exceptional,
respectively). Moreover, it follows from
\cite[Theorem~1.30]{ChSh09} and \cite[Theorem~3.15]{ChSh09} that
the subgroup $G\subset\GL_{n+1}(\C)$ is transitive (i.\,e. the
corresponding $(n+1)$-dimensional representation is irreducible)
provided that the singularity~\mbox{$\C^{n+1}\slash G$} is
weakly-exceptional. Similarly, it follows from
\cite[Theorem~1.29]{ChSh09} that $G$ must be primitive (see
\cite{Bli17} or, for example, \cite[Definition~1.21]{ChSh09}) if
$\C^{n+1}/G$ is exceptional. Finally, it follows from
\cite[Theorem~3.16]{ChSh09} (\cite[Theorem~3.15]{ChSh09},
respectively) that $\C^{n+1}\slash G$ is not exceptional if $G$
has a~semi-invariant\footnote{Recall that a \emph{semi-invariant}
of degree $d$ of the group $G\subset\GL_{n+1}(\C)$ is a one-dimensional
subrepresentation in $\mathrm{Sym}^{d}(\mathbb{C}^{n+1})$.} of
degree~at~most~$n+1$ (of degree~at~most~$n$, respectively).%
\end{example}

Starting from this point, we restrict ourselves to the case of
quotient singularities.

In low dimensions, the study of exceptional and weakly-exceptional
quotient singularities is closely related to the classification of
finite collineation groups (see \cite{Bli17}, \cite{Br67},
\cite{Li71}, \cite{Wa69}, \cite{Wa70}, \cite{Fe71}). Using
classical results of Blichfeldt, Brauer, and Lindsey, exceptional
quotient singularities of dimensions $3$, $4$, $5$ and $6$ have
been completely classified by Markushevich, Prokhorov, and the
authors (see \cite{MarPr99}, \cite{ChSh09}, \cite{ChSh10}).
Moreover, we used the classification obtained by Wales in
\cite{Wa69} and~\cite{Wa70} to prove that seven-dimensional
exceptional quotient singularities do not exist (see
\cite{ChSh10}). Sakovics classified weakly-exceptional quotient
singularities of dimensions $3$ and $4$ (see \cite{Sak10}).
Higher-dimensional weakly-exceptional quotient singularities were
studied in \cite{Crelle}. Unfortunately, we have no clear picture
which finite subgroups in $\GL_{n+1}(\C)$ give rise to exceptional
or weakly-exceptional singularities for $n\gg 0$.

A surprising fact observed in \cite{ChSh10} is that among the
(very few) groups corresponding to exceptional six-dimensional
quotient singularities there appears a central extension $2.\HaJ$
of the Hall--Janko sporadic simple group (see~\cite{Li70}).
Actually, this property is very rare for the projective
representations of sporadic simple groups, so that essentially we
have only one more example of this kind of behavior among them. It
is related to the Suzuki sporadic simple group (see
\cite{Suzuki69}). In this paper we prove the following

\begin{theorem}[{cf.~\cite[Theorem~14]{Gokova}}]
\label{theorem:main} Let $G$ be a sporadic simple finite group, or
its stem extension.\footnote{Recall that a \emph{stem extension}
of a perfect group $G$ is a central extension of $G$ that can be
obtained as a quotient of the universal central extension of $G$
by its Schur multiplier, see e.\,g.~\cite[\S9.4]{Rotman} for
detailes.} Let $G\hookrightarrow\GL(U)$ be a (faithful) finite
dimensional complex representation of $G$. Then the singularity
$U/G$ is exceptional if and only if $G\cong 2.\HaJ$, and $U$ is a
$6$-dimensional irreducible representation of $G$. The singularity
$U/G$ is weakly-exceptional but not exceptional if and only if
$G\cong 6.\Suz$ is the central extension of the Suzuki simple
group by the cyclic group of order $6$, and $U$ is a
$12$-dimensional irreducible representation of $G$.
\end{theorem}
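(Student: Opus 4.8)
The strategy is to go through the list of sporadic simple groups and their stem extensions one at a time, and for each admissible faithful irreducible representation $G\hookrightarrow\GL(U)$ apply the criteria recalled in Example~\ref{example:exceptional-quotient}. Recall that weak-exceptionality forces the representation to be irreducible, and exceptionality forces it moreover to be primitive; so at the outset we may discard all reducible representations, and for the exceptional case all imprimitive ones. The decisive tool is the semi-invariant test: if $G\subset\GL(U)$ with $\dim U=n+1$ has a semi-invariant of degree $\le n+1$ (respectively $\le n$), then $U/G$ is not exceptional (respectively not weakly-exceptional). Since our groups are \emph{simple} or central extensions thereof, a semi-invariant of degree $d$ is the same as an invariant of $G$ (or of the relevant quotient acting on $\Sym^d(U^\vee)$) — indeed for a perfect group every one-dimensional representation is trivial — so the question becomes: what is the smallest degree $d$ such that $\Sym^d(U^\vee)^G\neq 0$? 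This can be read off from the character table via the formula for $\dim\Sym^d(U^\vee)^G=\frac1{|G|}\sum_{g}\chi_{\Sym^d}(g)$, equivalently from the Molien series $\sum_d \dim(\Sym^d U^\vee)^G\, t^d=\frac1{|G|}\sum_g \frac1{\det(1-t\,g)}$.

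\textbf{Carrying it out.} First I would assemble, from the ATLAS, for every sporadic group $G$ and every stem extension, the list of faithful irreducible characters together with their dimensions, and immediately restrict attention to those whose dimension $n+1$ is small enough that the group could plausibly have no low-degree invariant — in practice the very large sporadic groups are eliminated almost for free, because already the \emph{quadratic} invariant (the symmetric square) or the cubic invariant tends to appear in degree~$\le n+1$ once $\dim U$ is not enormous relative to the group; more precisely, for an orthogonal or symplectic representation there is an invariant in $\Sym^2 U^\vee$, so those representations fail the test whenever $\dim U\ge 1$, leaving only genuinely complex (not self-dual) representations as candidates. For each surviving pair $(G,U)$ I would compute the first few terms of the Molien series and check whether an invariant appears in degree $\le \dim U$ (killing weak-exceptionality) or in degree $\le \dim U$... i.e.\ $\le n+1$ (killing exceptionality). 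The two genuinely delicate cases are $2.\HaJ$ with its $6$-dimensional representation and $6.\Suz$ with its $12$-dimensional representation: here one must show the \emph{positive} statement, namely that $\C^6/2.\HaJ$ is exceptional and $\C^{12}/6.\Suz$ is weakly-exceptional. For $2.\HaJ$ this is already known from \cite{ChSh10}; for $6.\Suz$ one verifies first that there is no invariant of degree $\le 12$ (so weak-exceptionality is not a priori excluded, and in fact by the classification/criteria of \cite{ChSh09} holds), and then exhibits an invariant of degree exactly $\le 12$... no: one shows there \emph{is} an invariant of degree $\le 12$ to rule out exceptionality while checking none of degree $\le 11$ is... hmm — the precise bookkeeping is: weakly-exceptional but not exceptional means no semi-invariant of degree $\le n=11$ but a semi-invariant of degree $=12=n+1$; this last point requires finding the degree-$12$ invariant of $6.\Suz$ explicitly (it is classical, related to the Suzuki chain) and confirming the Molien series has no earlier invariant.

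\textbf{Main obstacle.} The conceptual steps are routine; the real work, and the main obstacle, is the sheer size of the case analysis together with the need to certify, for the two exceptional examples, the nonvanishing/vanishing of specific graded pieces of invariant rings. Ruling out the bulk of sporadic groups is easy via the self-duality remark (symmetric-square invariant) and via crude dimension bounds comparing $\dim\Sym^d U^\vee$ with $|G|$. What takes care is (i) the intermediate-size groups possessing small-dimensional \emph{complex} irreducible representations — e.g.\ the various stem extensions of $\mathrm{M}_{12}$, $\mathrm{M}_{22}$, $\mathrm{J}_3$, $\mathrm{Co}_i$, $\mathrm{McL}$, $\mathrm{Ru}$, $\mathrm{Fi}_{22}$, $\mathrm{HN}$ — where one must genuinely compute the first several coefficients of the Molien series to locate the first invariant; and (ii) the $6.\Suz$ case, where the positive assertion (weak-exceptionality, plus the precise failure of exceptionality at degree $12$) has to be pinned down rather than merely excluded. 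I expect the proof to reduce, after the easy eliminations, to a finite table of Molien-series computations, with the $2.\HaJ$ and $6.\Suz$ entries treated separately by hand (or by reference to \cite{ChSh10} and to the known invariant theory of the Suzuki group acting on $\C^{12}$).
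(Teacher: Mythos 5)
Your reduction of the problem to a case-by-case elimination via low-degree invariants (Molien series / symmetric-power decompositions, read off from the ATLAS and GAP) matches the negative half of the paper's argument, which is exactly the content of Appendix~\ref{section:semi-invariants}: absence of a semi-invariant of degree at most $n$ (resp.\ $n+1$) is a \emph{necessary} condition for weak exceptionality (resp.\ exceptionality), and this kills every candidate except $2.\HaJ$ on $\C^6$ and $6.\Suz$ on $\C^{12}$. The $2.\HaJ$ case is indeed quoted from \cite{ChSh10}, as you say, and non-exceptionality of $\C^{12}/6.\Suz$ does follow from the degree-$12$ invariant together with the absence of invariants in degrees $\le 11$ (Corollary~\ref{corollary:no-semiinvariants}).

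The genuine gap is the positive assertion that $\C^{12}/6.\Suz$ \emph{is} weakly-exceptional. You write that this ``in fact by the classification/criteria of \cite{ChSh09} holds'' and later defer it to ``the known invariant theory of the Suzuki group,'' but no such sufficient criterion exists: having no semi-invariant of degree $\le n$ is necessary, not sufficient, for weak exceptionality (the paper stresses this explicitly in the introduction), and \cite{ChSh09} does not classify twelve-dimensional weakly-exceptional singularities. This is precisely the new geometric content of the paper (Theorem~\ref{theorem:Suzuki}), and it occupies all of Section~\ref{section:Suzuki}: one assumes non-weak-exceptionality, invokes Theorem~\ref{theorem:weakly-exceptional-quotient-criterion} from \cite{Crelle} to produce a $\bar G$-invariant projectively normal Fano-type subvariety $V\subset\P^{11}$ with prescribed degree bounds and cohomology vanishing, and then rules out every possible $V$ by confronting its Hilbert polynomial with the admissible partial sums $\Sigma_m$ of the decompositions of $\Sym^m(U^\vee)$ for $m\le 12$ (Lemma~\ref{lemma:splitting}), together with degree and genus estimates coming from general linear sections (Lemmas~\ref{lemma:qivj}--\ref{lemma:qivj-curve}, Corollaries~\ref{corollary:curve}--\ref{corollary:degree-bounds}) and a final computer-assisted combinatorial check (Lemmas~\ref{lemma:python-8} and~\ref{lemma:python-9}). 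Your proposal contains no substitute for this step. A secondary inaccuracy: your blanket elimination of self-dual representations via an invariant in $\Sym^2U^\vee$ is wrong for symplectic (quaternionic) representations, where the invariant form lies in $\Lambda^2U^\vee$; indeed the $6$-dimensional representation of $2.\HaJ$ is symplectic and has no quadratic invariant, so this shortcut would have discarded the main example.
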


Theorem~\ref{theorem:main} shows that the groups $\HaJ$ and $\Suz$
are somehow distinguished among the sporadic simple groups from
the geometric point of view, and therefore motivates the following

\begin{question}\label{question:WTF}
Is there some group-theoretic property that distinguishes the
groups $\HaJ$ and $\Suz$ among the sporadic simple groups?
\end{question}

As one can see from Appendix~\ref{section:semi-invariants}, one of
the characterizations of these groups comes from the fact that the
groups $2.\HaJ$ and $6.\Suz$ have irreducible representations with
no semi-invariants of low degrees. Note that this is a priori not
equivalent to weak exceptionality of the corresponding 
quotient singularity, and the geometric
characterization via weak exceptionality requires another series
of coincidences. On the other hand, it would be interesting to
know if there is some intrinsic characterization of the groups
$\HaJ$ and $\Suz$ that goes beyond the observation concerning the
semi-invariants~--- possibly not even involving representation
theory at all.
One of the goals of this paper as we see it is to
attract attention of the experts in group theory to
Question~\ref{question:WTF}, and more generally to a more broad
range of questions on the possible interplay between the
properties of certain groups and geometrical properties of the
corresponding quotient singularities.\footnote{
Note that there is an interesting characterization
of the groups $2.\HaJ$ and $6.\Suz$ together with the representations
of these groups arising in Theorem~\ref{theorem:main}
via irreducibility of symmetric powers
obtained in~\cite[Theorem~1.1]{GurTiep},
that has geometric implications concerning stable vector bundles,
cf.~\cite[Corollary~1.3]{GurTiep} and~\cite{BalajiKollar}.}

To study exceptionality and weak-exceptionality of a singularity
$\C^{n+1}/G$ for a finite subgroup $G\subset\GL_{n+1}(\C)$, one
can always assume that the group $G$ does not contain reflections
(cf. Example~\ref{example:exceptional-quotient} and
\cite[Remark~1.16]{ChSh10}). Keeping in mind
Example~\ref{example:exceptional-quotient}, we see that to prove
Theorem~\ref{theorem:main}, we may restrict ourselves to the case
of irreducible representations. Similarly, it follows from
Example~\ref{example:exceptional-quotient} that we may exclude
from our search the groups that have semi-invariants of low
degrees for the corresponding representations 
by a straightforward case by case study. The results of
the corresponding computations are listed in
Appendix~\ref{section:semi-invariants}. They were obtained using
the GAP software (see \cite{GAP}) and the classification of all
finite simple groups (see \cite{Atlas}) and communicated to us by
A.\,Zavarnitsyn. As a result, we are left with just two
candidates: the group $2.\HaJ$ acting in $U\cong\C^6$, and the
group $6.\Suz$ acting in $U\cong\C^{12}$. The exceptionality of
the quotient singularity corresponding to the first case was
settled in~\cite{ChSh10}. Therefore, the only new result of
geometric nature we obtain here is the following theorem that is
proved in Section~\ref{section:Suzuki}.

\begin{theorem}\label{theorem:Suzuki}
Let $G\cong 6.\Suz$, and let $U$ be a $12$-dimensional
irreducible representation of $G$.
Then the singularity $U/G$
is weakly-exceptional but not exceptional.
\end{theorem}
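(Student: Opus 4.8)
The plan is to verify the two statements separately. For weak exceptionality, by Example~\ref{example:exceptional-quotient} it suffices to check that the $12$-dimensional representation $U$ of $G\cong 6.\Suz$ is irreducible (which is part of the hypothesis) and to rule out, for \emph{every} subgroup $\bar G\subset\PGL(U)$ projectively equivalent to the image of $G$, the existence of a $\bar G$-invariant hypersurface of small degree; concretely one shows that $\C^{12}/G$ is weakly-exceptional provided $G$ has no semi-invariant of degree at most $12$ and, moreover, is transitive in the sense used there. Transitivity is immediate from irreducibility of $U$. The absence of semi-invariants of degree $\le 12$ is a finite check: the relevant symmetric powers $\Sym^d(U)$ for $1\le d\le 12$ contain no one-dimensional subrepresentation of $G$; these computations are exactly what is recorded in Appendix~\ref{section:semi-invariants} (carried out with GAP using the character table of $6.\Suz$ from the Atlas). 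I would quote that computation and deduce weak exceptionality from \cite[Theorem~3.15]{ChSh09}.

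For the "not exceptional" part, again by Example~\ref{example:exceptional-quotient} (last sentence) it is enough to exhibit a semi-invariant of $G$ of degree at most $n+1=12$ — but that would contradict weak exceptionality. So instead the point must be that $\C^{12}/G$ fails the \emph{stronger} exceptionality condition for a subtler reason than a low-degree invariant hypersurface: one needs a pair of proper subvarieties, or a log pair $(\P(U),\,\frac{1}{12}D)$ with $D$ a $G$-invariant (not necessarily irreducible) divisor of degree $12$ whose log canonical threshold is exactly $1$, realized along a positive-dimensional locus, so that the relevant "exceptional" inequality in Shokurov's definition becomes an equality. Concretely I would look for a $G$-invariant effective divisor $D$ on $\P^{11}=\P(U)$ of degree $12$ with $\lct(\P^{11},D)=1$ — for instance a union of $G$-translates of a hyperplane section through a special $G$-orbit, or the zero locus of a well-chosen element of $\Sym^{12}(U)$ that is semi-invariant for some \emph{larger} group containing $G$ (e.g. the normalizer in $\GL_{12}$), making $D$ $G$-invariant as a divisor though $\Sym^{12}(U)^G=0$. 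The key is that invariance of the \emph{divisor} $D$ only requires a one-dimensional subrepresentation in $\Sym^{12}(U)$ for the group $\langle G, \text{scalars}\rangle$ in $\PGL$, not a genuine $G$-semi-invariant.

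The main obstacle is producing this degree-$12$ divisor $D$ with $\lct = 1$ and checking the log canonical threshold. I expect the natural candidate to come from the geometry of the $6.\Suz$-action on $\P^{11}$: the Suzuki group acts on the complex Leech lattice / the $12$-dimensional module with a distinguished orbit of points (images of "short vectors"), and the union of the projective tangent hyperplanes, or a determinantal-type locus, should give an invariant degree-$12$ hypersurface. To compute $\lct$ one stratifies $\P^{11}$ by $G$-orbit type, bounds multiplicities of $D$ along each stratum using the transitivity of $G$ on the relevant strata (a multiplicity at one point of an orbit is the multiplicity at every point, which forces it to be small), and invokes the standard inequality $\lct(\P^{N},D)\ge \min\big(1,\,(N+1-\dim Z)/\mathrm{mult}_Z D\big)$ together with inversion of adjunction along each stratum. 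The delicate case is the deepest stratum — the fixed points of large subgroups — where one must show $\mathrm{mult}$ is small enough to keep $\lct$ from dropping below $1$, and simultaneously that along some stratum equality $\lct=1$ is attained so that exceptionality genuinely fails. This is where the detailed character-theoretic and orbit-geometric input for $6.\Suz$ in dimension $12$ does the real work.
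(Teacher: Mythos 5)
Your proposal has the degree bookkeeping backwards, and this derails both halves of the argument. The obstruction to \emph{weak} exceptionality is a semi-invariant of degree at most $n=11$ (the dimension of $\P(U)$), while the obstruction to exceptionality is a semi-invariant of degree at most $n+1=12$. By Lemma~\ref{lemma:splitting} and Corollary~\ref{corollary:no-semiinvariants}, the group $G\cong 6.\Suz$ has no semi-invariant of degree $\le 11$ but \emph{does} have an invariant of degree exactly $12$ (the trivial summand in $\Delta_{12}$). Hence the ``not exceptional'' part is immediate from the last sentence of Example~\ref{example:exceptional-quotient}: a degree-$12$ semi-invariant kills exceptionality and is perfectly compatible with weak exceptionality. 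You rejected exactly this correct route on the mistaken ground that a degree-$12$ semi-invariant ``would contradict weak exceptionality,'' and replaced it by a speculative construction of an invariant degree-$12$ divisor with $\lct=1$ together with an orbit-by-orbit multiplicity analysis; none of that is needed, and as written it is not a proof (no divisor is produced and no log canonical threshold is computed).

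The more serious gap is in the weak exceptionality part. Absence of semi-invariants of degree $\le 11$ together with transitivity is only a \emph{necessary} condition; the paper stresses explicitly that this is a priori not equivalent to weak exceptionality, so you cannot ``deduce weak exceptionality from \cite[Theorem~3.15]{ChSh09}.'' The actual proof runs through Theorem~\ref{theorem:weakly-exceptional-quotient-criterion}: if $U/G$ were not weakly-exceptional, there would be a $\bar G$-invariant projectively normal Fano-type subvariety $V\subset\P^{11}$ of some dimension $1\le\dim(V)\le 10$ with bounded degree and prescribed cohomology vanishing. The hypersurface case $\dim(V)=10$ is the only one excluded by the absence of semi-invariants of degree $\le 11$ (Lemma~\ref{lemma:0-10}); all lower-dimensional invariant subvarieties must then be ruled out by a different mechanism, namely that projective normality forces every value $h_m=h^0(\OO_V(m))$ of the Hilbert polynomial to lie in the set $\Sigma_m$ of partial sums of dimensions of irreducible constituents of $\Sym^m(U^{\vee})$, combined with the degree and genus bounds coming from general curve sections (Lemmas~\ref{lemma:qivj}--\ref{corollary:degree-bounds}) and the computer verifications of Lemmas~\ref{lemma:python-8} and~\ref{lemma:python-9}. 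This Hilbert-polynomial analysis is the real content of the proof of Theorem~\ref{theorem:Suzuki}, and it is entirely missing from your proposal.
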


\medskip

We would like to thank A.\,Zavarnitsyn for computational support,
and P.\,H.\,Tiep for interesting discussions.
We would like to thank J.\,Park for inviting us to Pohang
Mathematics Institute where this paper was finished. Finally, we
would like to thank V.\,Przyjalkowski for creating an inspiring
atmosphere while we have been working on this paper.
The authors were partially supported by the grants
RFBR~11-01-00336-a, N.Sh.~4713.2010.1,
MK-6612.2012.1, and AG Laboratory SU-HSE, RF government
grant ag.~11.G34.31.0023.

\section{Suzuki simple group}
\label{section:Suzuki}

In this section we prove Theorem~\ref{theorem:Suzuki} using the
method we first applied in~\cite{Gokova} and the following

\begin{theorem}[{\cite[Theorem~1.12]{Crelle}}]
\label{theorem:weakly-exceptional-quotient-criterion} Let $G$ be a
finite group in $\GL_{n+1}(\C)$ that does not contain reflections,
and let $\bar{G}$ be the image of the group $G$ in
$\PGL_{n+1}(\mathbb{C})$. If $\C^{n+1}\slash G$ is not
weakly-exceptional, then there~is a $\bar{G}$-invariant Fano
type\footnote{A variety $V$ is said to be of \emph{Fano type} if
it is irreducible, normal, and there exists an effective
$\mathbb{Q}$-divisor $\Delta_{V}$ on $V$ such that
$-(K_{V}+\Delta_{V})$ is a~$\mathbb{Q}$-Cartier ample divisor, and
the log pair $(V,\Delta_{V})$ has at most Kawamata log terminal
singularities.} projectively normal subvariety $V\subset\P^n$ such
that
$$
\mathrm{deg}\big(V\big)\leqslant {n\choose \mathrm{dim}\big(V\big)},%
$$
and for every $i\geqslant 1$ and for every $m\geqslant 0$, we have
$h^{i}(\mathcal{O}_{\P^n}(m)\otimes
\mathcal{I}_{V})=h^{i}(\mathcal{O}_{V}(m))=0$, and
$$
h^{0}\Big(\mathcal{O}_{\P^n}\Big(\big(\mathrm{dim}(V)+1\big)\Big)\otimes 
\mathcal{I}_{V}\Big)\geqslant {n\choose \mathrm{dim}\big(V\big)+1},%
$$
where $\mathcal{I}_{V}$ is the ideal sheaf of the subvariety
$V\subset\P^n$. Let $\Pi$ be a general linear subspace in $\P^n$
of codimension $k\leqslant\mathrm{dim}(V)$. Put $X=V\cap\Pi$. Then
$h^{i}(\mathcal{O}_{\Pi}(m)\otimes \mathcal{I}_{X})=0$ for every
$i\geqslant 1$ and $m\geqslant k$, where $\mathcal{I}_{X}$ is the
ideal sheaf of the subvariety $X\subset\Pi$. Moreover, if $k=1$
and $\mathrm{dim}(V)\geqslant 2$, then $X$ is irreducible,
projectively normal and $h^{i}(\mathcal{O}_{X}(m))=0$ for every
$i\geqslant 1$ and $m\geqslant 1$.
\end{theorem}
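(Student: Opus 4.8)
The plan is to produce the subvariety $V$ by applying a minimal log canonical center construction to a suitable $\bar{G}$-invariant divisor on $\P^n$, and then to deduce the statements about the general linear section $X=V\cap\Pi$ by elementary hyperplane-section arguments. I would first dispose of the case in which $\bar{G}\subset\PGL_{n+1}(\C)$ is reducible: then there is a proper $\bar{G}$-invariant linear subspace $V=\P^{\ell}\subset\P^n$, which is a smooth Fano (hence of Fano type), is arithmetically Cohen--Macaulay (so that $h^i(\OO_{\P^n}(m)\otimes\mathcal{I}_{V})=h^i(\OO_{V}(m))=0$ for all $i\ge1$ and $m\ge0$), is projectively normal, has $\mathrm{deg}(V)=1\le\binom{n}{\ell}$, and satisfies $h^0(\OO_{\P^n}(\ell+1)\otimes\mathcal{I}_{V})=\binom{n+\ell+1}{n}-\binom{2\ell+1}{\ell}\ge\binom{n}{\ell+1}$; all conclusions of the theorem for $X=V\cap\Pi\cong\P^{\ell-k}$ then follow from the same classical facts about linear subspaces. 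So from now on assume $\bar{G}$ is transitive.

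To handle the transitive case I would use the characterization of weak exceptionality via invariant log canonical thresholds (\cite{ChSh09}): as $\C^{n+1}/G$ is not weakly exceptional, there is a $\bar{G}$-invariant effective $\Q$-divisor $D\qlineq -K_{\P^n}$ with $\lct(\P^n,D)<1$. Set $B=\lct(\P^n,D)\cdot D$, so that $(\P^n,B)$ is log canonical but not Kawamata log terminal, $\mathrm{deg}(B)=\lct(\P^n,D)(n+1)<n+1$, and $-(K_{\P^n}+B)\qlineq(n+1-\mathrm{deg}(B))H$ is ample, where $H$ is the hyperplane class. After an $\bar{G}$-equivariant perturbation of $B$ inside its $\Q$-linear system --- an equivariant form of Kawamata's tie-breaking, using $\bar{G}$-invariant auxiliary divisors together with the connectedness of the non-klt locus (which holds because $-(K_{\P^n}+B)$ is ample) --- I would arrange that the non-klt locus of $(\P^n,B)$ is a single irreducible log canonical center $V$; this $V$ is then $\bar{G}$-invariant and normal, and Kawamata's subadjunction yields an effective $\Q$-divisor $\Delta_{V}$ on $V$ with $(V,\Delta_{V})$ Kawamata log terminal and $K_{V}+\Delta_{V}\qlineq(K_{\P^n}+B)|_{V}$. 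Since $-(K_{V}+\Delta_{V})\qlineq(n+1-\mathrm{deg}(B))H|_{V}$ is ample and $\Q$-Cartier, $V$ is of Fano type.

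The cohomological conditions follow from the Ambro--Fujino vanishing theorem for the non-klt locus of a log canonical pair, applied to $(\P^n,B)$ and to the Cartier divisor $mH$ (admissible because $mH-(K_{\P^n}+B)\qlineq(m+n+1-\mathrm{deg}(B))H$ is ample for every $m\ge0$): it gives $h^i(\OO_{\P^n}(m)\otimes\mathcal{I}_{V})=h^i(\OO_{V}(m))=0$ for all $i\ge1$ and $m\ge0$, and the vanishing of $h^1(\OO_{\P^n}(m)\otimes\mathcal{I}_{V})$ makes the restriction maps $H^0(\OO_{\P^n}(m))\to H^0(\OO_{V}(m))$ surjective, so the normal variety $V$ is projectively normal. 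For the degree bound I would cut $V$ by $\mathrm{dim}(V)$ general hyperplanes, obtaining $\mathrm{deg}(V)$ reduced points which form the whole non-klt locus of the still log canonical pair $(\P^{r},B|_{\P^{r}})$ with $r=n-\mathrm{dim}(V)$ and $\mathrm{deg}(B|_{\P^{r}})=\mathrm{deg}(B)<n+1$; since $\mathrm{dim}(V)>\mathrm{deg}(B)-r-1$, Ambro--Fujino vanishing gives $h^1(\OO_{\P^r}(\mathrm{dim}(V))\otimes\mathcal{I}_{V\cap\P^r})=0$, hence those $\mathrm{deg}(V)$ points impose independent conditions on forms of degree $\mathrm{dim}(V)$, i.e. $\mathrm{deg}(V)\le h^0(\OO_{\P^{r}}(\mathrm{dim}(V)))=\binom{n}{\mathrm{dim}(V)}$. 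The inequality $h^0(\OO_{\P^n}(\mathrm{dim}(V)+1)\otimes\mathcal{I}_{V})\ge\binom{n}{\mathrm{dim}(V)+1}$ is equivalent, via the vanishing above, to the estimate $h^0(\OO_{V}(\mathrm{dim}(V)+1))\le\binom{n+\mathrm{dim}(V)+1}{n}-\binom{n}{\mathrm{dim}(V)+1}$ for the value at $\mathrm{dim}(V)+1$ of the Hilbert polynomial of $V$, which I would derive from the structure of the homogeneous ideal of the Fano-type, projectively normal variety $V$ together with the degree bound just obtained.

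It remains to analyze $X=V\cap\Pi$ for $\Pi$ general of codimension $k\le\mathrm{dim}(V)$; this part is routine. A defining linear form of each successive general hyperplane is a non-zero-divisor on the reduced scheme cut out so far, giving exact sequences $0\to\OO_{\Pi'}(m-1)\otimes\mathcal{I}_{V\cap\Pi'}\to\OO_{\Pi'}(m)\otimes\mathcal{I}_{V\cap\Pi'}\to\OO_{H}(m)\otimes\mathcal{I}_{V\cap\Pi'\cap H}\to0$; feeding these the vanishing for $V$ in all degrees $\ge0$ and losing one unit in the degree at each of the $k$ steps yields $h^i(\OO_{\Pi}(m)\otimes\mathcal{I}_{X})=0$ for all $i\ge1$ and $m\ge k$. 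If $k=1$ and $\mathrm{dim}(V)\ge2$, then $X$ is a general hyperplane section of the normal projective variety $V$ of dimension at least $2$, hence irreducible (Bertini) and normal (Seidenberg); the vanishing $h^1(\OO_{\Pi}(m)\otimes\mathcal{I}_{X})=0$ for $m\ge1$ makes $X$ projectively normal, and $h^i(\OO_{X}(m))=0$ for $i\ge1$, $m\ge1$ follows from $0\to\OO_{V}(m-1)\to\OO_{V}(m)\to\OO_{X}(m)\to0$ and the vanishing of $h^i(\OO_{V}(m-1))$ in degrees $\ge0$. The main obstacle in the whole argument is the construction of $V$ in the transitive case --- arranging, by an equivariant tie-breaking (and, when the minimal log canonical centers form a nontrivial $\bar{G}$-orbit, by passing instead to an irreducible $\bar{G}$-invariant log canonical center of larger dimension that is still of Fano type), that $V$ can be taken simultaneously irreducible, $\bar{G}$-invariant and of Fano type, together with the extraction of the sharp bound $h^0(\OO_{\P^n}(\mathrm{dim}(V)+1)\otimes\mathcal{I}_{V})\ge\binom{n}{\mathrm{dim}(V)+1}$; by contrast, the reducible case and the passage to the linear section are routine.
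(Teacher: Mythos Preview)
The paper does not contain a proof of this statement: Theorem~\ref{theorem:weakly-exceptional-quotient-criterion} is quoted verbatim from \cite[Theorem~1.12]{Crelle} and used as a black box in Section~\ref{section:Suzuki}. There is therefore nothing in this paper to compare your argument against.

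That said, your outline follows the expected strategy (invariant log canonical threshold characterization, tie-breaking to a minimal center, Kawamata subadjunction, Nadel/Ambro--Fujino vanishing, then hyperplane-section induction), and the routine parts --- the reducible case, the vanishing for $V$ and for $X=V\cap\Pi$, irreducibility and normality of a general hyperplane section --- are handled correctly. Two places remain genuine gaps rather than details. First, the $\bar G$-equivariant tie-breaking: you need the non-klt locus to be a \emph{single} irreducible $\bar G$-invariant center, but a priori the minimal centers form a $\bar G$-orbit, and your parenthetical suggestion to pass to ``an irreducible $\bar G$-invariant log canonical center of larger dimension that is still of Fano type'' is exactly the hard step --- you have not explained why such a center exists or why subadjunction still yields a klt boundary on it. Second, the inequality $h^0(\mathcal O_{\P^n}(\dim V+1)\otimes\mathcal I_V)\ge\binom{n}{\dim V+1}$ is asserted to follow ``from the structure of the homogeneous ideal \ldots\ together with the degree bound'', but no actual argument is given; this bound is not a formal consequence of projective normality plus $\deg V\le\binom{n}{\dim V}$ alone, and in \cite{Crelle} it comes from a more careful analysis. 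If you intend this as a self-contained proof rather than a pointer to \cite{Crelle}, both of these steps need to be filled in.
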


Let $G\cong 6.\Suz$ be the central extension of the Suzuki
sporadic simple group. Then there is an embedding
$G\hookrightarrow\SL_{12}(\mathbb{C})$ that is given by an
irreducible $12$-dimensional $G$-representation $U$.

Denote by $\Delta_k$ the collection of dimensions of irreducible
subrepresentations of $\Sym^k(U^{\vee})$. We will use the
following notation: writing $\Delta_k=[\ldots, r\times m,\ldots]$,
we mean that there are exactly~$r$ summands
dimension~$m$ (not necessarily isomorphic to each other)
in the decomposition of~\mbox{$\Sym^k(U^{\vee})$}
into a sum of irreducible subrepresentations.
Furthermore, denote by $\Sigma_k$ the set of partial sums
of~$\Delta_k$, i.\,e. the set of all numbers $s=\sum r'_im_i$,
where 
$$\Delta_k=[r_1\times m_1,r_2\times m_2,\ldots, r_i\times
m_i,\ldots]$$ 
and $0\le r_i'\le r_i$ for all $i$. We use the
abbreviation $m_{i}$ for $1\times m_{i}$.

We will need the following properties of the $G$-representation
$U$ that can be verified by direct computations. We used the GAP
software (see \cite{GAP}) to carry them out.

\begin{lemma}\label{lemma:splitting}
The representations $\Sym^k(U^{\vee})$ are irreducible for $2\le
k\le 5$ (and have dimensions $78$, $364$, $1365$ and $4368$,
respectively). Futhermore, $\Delta_6=[364,12012]$,
$\Delta_7=[4368,27456]$,
\begin{gather*}
\Delta_8=[1365,4290,27027,42900],\quad
\Delta_9=[2\times 364, 2\times 16016, 35100,100100],\\
\Delta_{10}=[78,1365,3003,4290, 2\times 27027, 2\times 75075, 139776],\\
\Delta_{11}=[12,924, 2\times 4368, 2\times 12012,
2\times 27456, 112320,144144, 2\times 180180],
\end{gather*}
and
\begin{multline*}
\Delta_{12}=[1,143, 2\times 364, 1001, 2\times 5940, 2\times 12012,
2\times 14300, 2\times 15015, 15795, 25025,\\
2\times 40040, 54054,75075,
88452, 2\times 93555, 2\times 100100, 163800,
168960,197120].
\end{multline*}
\end{lemma}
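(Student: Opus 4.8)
This lemma is a finite list of assertions about how the representations $\Sym^k(U^\vee)$ decompose into irreducibles for $2\le k\le 12$, so the proof is a computation with the character of $U$, and the plan is to organize it so that it can also be checked by hand. First I would fix for $\chi$ one of the (complex-conjugate) faithful irreducible characters of degree $12$ of $6.\Suz$ as recorded in the \texttt{ATLAS}; since passing to the conjugate representation replaces every $\Sym^k(U^\vee)$ by its complex conjugate and hence leaves the dimensions of the irreducible constituents unchanged, the choice is immaterial to the statement. The character of $U^\vee$ is $\overline{\chi}$, and the character of $\Sym^k(U^\vee)$ is recovered from the eigenvalues $\lambda_1,\dots,\lambda_{12}$ of a group element $g$ acting on $U^\vee$ through the classical generating-function identity
$$
\sum_{k\ge 0}\chi_{\Sym^k(U^\vee)}(g)\,t^k=\prod_{j=1}^{12}\frac{1}{1-\lambda_j t},
$$
equivalently via Newton's identities expressing the complete homogeneous symmetric functions of the $\lambda_j$ through the power sums $\overline{\chi}(g^m)$; this is exactly what the symmetric-power routines of \texttt{GAP} carry out on the character table of $6.\Suz$. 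Decomposing each of these class functions against the complete list of irreducible characters of $6.\Suz$ by the orthogonality relations then yields the multiplicities, and hence the lists $\Delta_k$ and the sets $\Sigma_k$.

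The only genuinely thoughtful part is a handful of consistency checks that both guard against input errors and make the lemma humanly verifiable. (i)~For each $k$ the dimensions listed in $\Delta_k$ must sum to $\dim\Sym^k(U^\vee)=\binom{k+11}{11}$: for instance $\binom{13}{2}=78$, $\binom{17}{6}=12376=364+12012$, $\binom{18}{7}=31824=4368+27456$, and at the other end the nineteen dimensions of $\Delta_{12}$ add up to $\binom{23}{11}=1352078$. (ii)~The central element $z$ of order $6$ acts on the irreducible $U$ by a primitive sixth root of unity $\varepsilon$ by Schur's lemma, hence on $\Sym^k(U^\vee)$ by the scalar $\varepsilon^{-k}$; therefore $\Sym^k(U^\vee)$ is inflated from the quotient $\bigl(6/\gcd(6,k)\bigr).\Suz$, and every constituent of $\Delta_k$ must be a faithful irreducible of that quotient. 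In particular $\Delta_6$ and $\Delta_{12}$ consist of ordinary irreducibles of $\Suz$ (so $1\in\Delta_{12}$ is forced), $\Delta_3$ and $\Delta_9$ of faithful irreducibles of $2.\Suz$, $\Delta_2,\Delta_4,\Delta_8,\Delta_{10}$ of faithful irreducibles of $3.\Suz$, and $\Delta_5,\Delta_7,\Delta_{11}$ of faithful irreducibles of $6.\Suz$. (iii)~The claimed irreducibility of $\Sym^k(U^\vee)$ for $2\le k\le 5$ can alternatively be read off from the classification in~\cite{GurTiep} of the finite groups admitting long runs of irreducible symmetric powers, in which the pair $(6.\Suz,U)$ appears.

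I expect the main obstacle to be bookkeeping rather than mathematics: one must be certain that the character table, the power maps, and the stored root-of-unity data feeding the $\Sym^k$-computation all belong to $6.\Suz$ itself, and not to $\Suz$, $2.\Suz$ or $3.\Suz$; that a single one of the two conjugate degree-$12$ characters is used throughout; and that each decomposition is taken against the entire irreducible character list rather than a subset of it. The dimension count in (i) and the central-character constraint in (ii) are precisely the safeguards that detect the usual slips, and once they hold for every $k$ from $2$ to $12$ the lemma is established.
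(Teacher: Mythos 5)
Your proposal is correct and follows essentially the same route as the paper, which simply asserts that the decompositions "can be verified by direct computations" with GAP on the character of the $12$-dimensional representation of $6.\Suz$; your description of the symmetric-power character computation and the decomposition via orthogonality is exactly what that computation does. The added safeguards (dimension sums $\binom{k+11}{11}$, the central-character constraint forcing constituents of $\Sym^k(U^{\vee})$ to live on $(6/\gcd(6,k)).\Suz$, and the cross-check against \cite{GurTiep} for $2\le k\le 5$) are sound and only strengthen the verification.
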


\begin{corollary}\label{corollary:no-semiinvariants}
The group $G$ does not have semi-invariants of degree $d\leqslant
11$, and does have a semi-invariant of degree $d=12$.
\end{corollary}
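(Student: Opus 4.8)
The plan is to deduce this directly from Lemma~\ref{lemma:splitting} together with the definition of a semi-invariant, so that the corollary is a matter of bookkeeping rather than of new computation. First I would recall that a semi-invariant of degree $d$ of $G\subset\GL(U)$ is a one-dimensional $G$-subrepresentation of $\Sym^d(U)$. Since $G\cong 6.\Suz$ is the universal central extension of the simple group $\Suz$, it is perfect, so every one-dimensional representation of $G$ is trivial; because all representations here are completely reducible and the trivial representation is self-dual, the existence of a semi-invariant of degree $d$ is then equivalent to $\Sym^d(U^\vee)$ containing a trivial subrepresentation, i.e.\ to the integer $1$ occurring in $\Delta_d$ in the notation introduced just before Lemma~\ref{lemma:splitting}. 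This reduces everything to inspecting the lists in Lemma~\ref{lemma:splitting}.

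Next I would run through the degrees $1\le d\le 12$. For $d=1$ one has $\Sym^1(U^\vee)=U^\vee$, irreducible of dimension $12$, so $1\notin\Delta_1$. For $2\le d\le 5$, Lemma~\ref{lemma:splitting} says $\Sym^d(U^\vee)$ is irreducible of dimension $78$, $364$, $1365$, $4368$ respectively, each $>1$, so $1\notin\Delta_d$. For $6\le d\le 11$ one simply reads off the explicitly listed decompositions: the smallest dimension of an irreducible summand occurring in $\Delta_6,\dots,\Delta_{11}$ is $364$, $4368$, $1365$, $364$, $78$, $12$ respectively, so $1\notin\Delta_d$ in each case. Hence $G$ has no semi-invariant of degree $d\le 11$. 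Finally, the decomposition of $\Delta_{12}$ in Lemma~\ref{lemma:splitting} contains the summand $1$, so $\Sym^{12}(U^\vee)$ has a one-dimensional (necessarily trivial) subrepresentation, which produces a semi-invariant of degree $12$.

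As for where the difficulty lies: there is essentially no obstacle in the corollary itself, since all of the substantive content is already packaged in Lemma~\ref{lemma:splitting} (whose verification is the GAP computation). The only point that deserves a sentence of care is the reduction in the first paragraph~--- namely that perfectness of $6.\Suz$ forbids nontrivial one-dimensional subrepresentations, so that ``semi-invariant of degree $d$ exists'' is genuinely equivalent to ``$1\in\Delta_d$'', and one does not need to worry separately about one-dimensional characters of $G$.
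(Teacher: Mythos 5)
Your argument is correct and is exactly how the paper intends Corollary~\ref{corollary:no-semiinvariants} to follow: it is read off from the decompositions in Lemma~\ref{lemma:splitting}, since a semi-invariant of degree $d$ is a one-dimensional subrepresentation of $\Sym^d(U)$, which for the perfect group $6.\Suz$ must be trivial (cf.\ the footnote in Appendix~\ref{section:semi-invariants}) and hence occurs if and only if $1\in\Delta_d$. Your extra sentence about passing between $\Sym^d(U)$ and $\Sym^d(U^{\vee})$ via self-duality of the trivial representation is a sensible piece of care that the paper leaves implicit.
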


Now we are ready to prove Theorem~\ref{theorem:main}. Suppose that
the singularity $U/G$ is not weakly-exceptional. Let $\bar{G}$ be
the image of the group $G$ in $\PGL_{12}(\mathbb{C})$. Then it
follows from
Theorem~\ref{theorem:weakly-exceptional-quotient-criterion} that
there~is a $\bar{G}$-invariant Fano type
projectively normal subvariety \mbox{$V\subset\P^{11}$} such that
$$
\mathrm{deg}\big(V\big)\leqslant {11\choose \mathrm{dim}\big(V\big)},%
$$
and for every $i\geqslant 1$ and $m\geqslant 0$, we have
$h^{i}(\mathcal{O}_{\P^{11}}(m)\otimes
\mathcal{I}_{V})=h^{i}(\mathcal{O}_{V}(m))=0$, where
$\mathcal{I}_{V}$ is the ideal sheaf of the subvariety
$V\subset\P^{11}$. Put $n=\dim(V)$.

\begin{lemma}\label{lemma:0-10}
One has $1\le n\le 9$.
\end{lemma}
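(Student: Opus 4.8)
The plan is to rule out the extreme values $n=0$, $n=10$, and $n=11$ for the dimension of the hypothetical $\bar G$-invariant subvariety $V\subset\P^{11}$, using a combination of the numerical constraints in Theorem~\ref{theorem:weakly-exceptional-quotient-criterion} and the representation-theoretic data of Lemma~\ref{lemma:splitting}.

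First I would dispose of $n=11$: a subvariety of dimension $11$ in $\P^{11}$ is all of $\P^{11}$, which is certainly not cut out in the way required, and in any case the condition $\deg(V)\le\binom{11}{11}=1$ would force $V=\P^{11}$, contradicting the requirement that $V$ be a proper subvariety (indeed $\mathcal I_V\neq 0$ is implicit). Next, $n=0$: then $V$ is a single $\bar G$-fixed point of $\P^{11}$, equivalently a one-dimensional subrepresentation of $U^\vee$ (or $U$); but $U$ is irreducible of dimension $12>1$, so no such point exists. The remaining case is $n=10$, i.e.\ $V$ is a $\bar G$-invariant hypersurface in $\P^{11}$. Such a hypersurface is defined by a single equation of some degree $d$, which is a $\bar G$-semi-invariant form of degree $d$ in $\Sym^d(U^\vee)$; the bound $\deg(V)=d\le\binom{11}{10}=11$ then forces $d\le 11$. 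But Corollary~\ref{corollary:no-semiinvariants} (equivalently, the explicit decompositions in Lemma~\ref{lemma:splitting} showing $\Sym^k(U^\vee)$ is irreducible of dimension $>1$ for $2\le k\le 11$, and the $12$-dimensional cases for $k=1$) says $G$ has no semi-invariant of degree $\le 11$. Hence $n\neq 10$.

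Combining these three exclusions with the trivial bound $0\le n\le 11$ coming from $V\subseteq\P^{11}$ yields $1\le n\le 9$, which is the assertion. I expect the main point requiring care to be the $n=10$ case: one must make sure that a $\bar G$-invariant (not merely $G$-invariant) hypersurface really does correspond to a one-dimensional subrepresentation of $\Sym^d(U^\vee)$ as a $G$-representation — this is exactly the notion of semi-invariant recorded in the footnote of Example~\ref{example:exceptional-quotient} and used in Corollary~\ref{corollary:no-semiinvariants} — and to note that the relevant degrees $d\le 11$ are precisely the ones covered by Lemma~\ref{lemma:splitting}. The cases $n=0$ and $n=11$ are essentially immediate.
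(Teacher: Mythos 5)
Your proposal is correct and follows essentially the same argument as the paper: $n\neq 0$ because $U$ is irreducible, and $n=10$ is excluded because a $\bar G$-invariant hypersurface of degree $\deg(V)\le\binom{11}{10}=11$ would give a semi-invariant of degree at most $11$, contradicting Corollary~\ref{corollary:no-semiinvariants}. The only difference is your explicit (and trivial) treatment of $n=11$, which the paper leaves implicit.
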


\begin{proof}
One has $n\neq 0$ since $U$ is an irreducible representation of
the group $G$. On the other hand, if $n=10$ then $V$ is an
$\bar{G}$-invariant hypersurface such that $\deg(V)\leqslant 11$,
which contradicts Corollary~\ref{corollary:no-semiinvariants}.
\end{proof}

Put $h_m=h^0(\OO_V(m))$ and
$q_m=h^0(\OO_{\P^{11}}(m)\otimes\mathcal{I}_{V})$ for every
$m\in\mathbb{Z}$. Then
$$
q_m=h^0\Big(\OO_{\P^{11}}\big(m\big)\Big)-h_m={11+m\choose m}-h_m
$$
for every $m\geqslant 1$, since
$h^{1}(\mathcal{O}_{\P^{11}}(m)\otimes \mathcal{I}_{V})=0$ for
every $m\geqslant 0$.

Let $H$ be a general hyperplane section of $V$. Put
$d=H^n=\deg(V)$ and $H_V(m)=\chi(\OO_{V}(m))$. Then $H_V(m)=h_m$
for every $m\geqslant 1$, since $h^{i}(\mathcal{O}_V(mH))=0$ for
every $i\geqslant 1$ and every $m\geqslant 0$. Recall that
$H_V(m)$ is a Hilbert polynomial of the subvariety $V$, which is a
polynomial in $m$ of degree $n$ with leading coefficient $d/n!$.

It follows from
Theorem~\ref{theorem:weakly-exceptional-quotient-criterion} that
$V$ has one more property that we need.  Let
$\Lambda_{1},\Lambda_{2},\ldots,\Lambda_{n}$ be general
hyperplanes in $\P^{11}$. Put $\Pi_{j}=\Lambda_{1}\cap\ldots\cap
\Lambda_{j}$, $V_{j}=V\cap\Pi_{j}$, and $H_{j}=V_{j}\cap H$ for
every $j\in\{1,\ldots,n\}$. Put $V_{0}=V$, $H_{0}=H$,
$\Pi_0=\P^{11}$. For every $j\in\{0,1,\ldots,n\}$, let
$\mathcal{I}_{V_{j}}$ be the ideal sheaf of the subvariety
$V_{j}\subset\Pi_{j}$. Then it follows from
Theorem~\ref{theorem:weakly-exceptional-quotient-criterion} that
$h^{i}(\mathcal{O}_{\Pi_j}(m)\otimes \mathcal{I}_{V_j})=0$ for
every $i\geqslant 1$ and $m\geqslant j$.

Recall that $\Pi_{j}\cong\P^{11-j}$ and put
$q_i(V_j)=h^0(\mathcal{O}_{\Pi_{j}}(i)\otimes\mathcal{I}_{V_{j}})$
for every $j\in\{0,1,\ldots,n\}$.

\begin{lemma}\label{lemma:qivj}
Suppose that $i\geqslant j+1$ and $j\in\{1,\ldots,n\}$. Then
$$
q_i(V_j)=q_i-{j\choose 1}q_{i-1}+{j\choose 2}q_{i-2}-\ldots+(-1)^jq_{i-j}.%
$$
\end{lemma}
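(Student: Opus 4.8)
The plan is to prove the identity by induction on $j$, the single input being a short exact sequence of ideal sheaves obtained by cutting $V_{j-1}$ with one more general hyperplane. Once this is in place, everything reduces to the one-step recursion $q_m(V_j)=q_m(V_{j-1})-q_{m-1}(V_{j-1})$ (valid for $m\ge j$), the definition $q_m(V_0)=q_m$, and Pascal's rule; and the hypothesis $i\ge j+1$ is precisely what is needed to feed the inductive formula back into itself.

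\emph{First}, I would establish the exact sequence and the recursion. Fix $j\in\{1,\dots,n\}$ and let $\ell_j\in H^0(\OO_{\Pi_{j-1}}(1))$ be the linear form cutting out $\Pi_j$ in $\Pi_{j-1}$. Since $n\le 9<11$ by Lemma~\ref{lemma:0-10}, we have $\dim V_{j-1}=n-(j-1)<11-(j-1)=\dim\Pi_{j-1}$, so $V_{j-1}$ is a proper closed subscheme of the integral variety $\Pi_{j-1}$; in particular the ideal sheaf $\mathcal{I}_{V_{j-1}}\subset\OO_{\Pi_{j-1}}$ is torsion free, and multiplication by $\ell_j$ is injective on $\mathcal{I}_{V_{j-1}}(m)$ for every $m$. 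For a sufficiently general choice of the hyperplanes $\Lambda_\bullet$ the hyperplane $\Lambda_j$ contains none of the finitely many associated subvarieties of $V_{j-1}$, hence $\ell_j$ is a non-zero-divisor on $\OO_{V_{j-1}}$, and therefore $\mathcal{I}_{V_{j-1}}\otimes\OO_{\Pi_j}$ is canonically identified with the ideal sheaf $\mathcal{I}_{V_j/\Pi_j}$ of $V_j=V_{j-1}\cap\Pi_j$ in $\Pi_j$. Tensoring the Koszul sequence $0\to\OO_{\Pi_{j-1}}(-1)\xrightarrow{\ell_j}\OO_{\Pi_{j-1}}\to\OO_{\Pi_j}\to 0$ with $\mathcal{I}_{V_{j-1}}(m)$ then gives, for every $m$, the short exact sequence
$$
0\longrightarrow\mathcal{I}_{V_{j-1}}(m-1)\longrightarrow\mathcal{I}_{V_{j-1}}(m)\longrightarrow\mathcal{I}_{V_j}(m)\longrightarrow 0 .
$$
Passing to cohomology on $\Pi_{j-1}$ and using the vanishing $h^1(\OO_{\Pi_{j-1}}(m-1)\otimes\mathcal{I}_{V_{j-1}})=0$ for $m-1\ge j-1$ supplied by Theorem~\ref{theorem:weakly-exceptional-quotient-criterion}, I conclude $q_m(V_j)=q_m(V_{j-1})-q_{m-1}(V_{j-1})$ for every $m\ge j$.

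\emph{Then} I would run the induction on $j\ge 0$, the case $j=0$ being the definition $q_i(V_0)=q_i$. Let $j\ge 1$ and assume $q_i(V_{j-1})=\sum_{k=0}^{j-1}(-1)^k{j-1\choose k}q_{i-k}$ for all $i\ge j$ (for $j=1$ this is merely $q_i(V_0)=q_i$). Let $i\ge j+1$; then $i\ge j$, so the recursion of the previous step gives $q_i(V_j)=q_i(V_{j-1})-q_{i-1}(V_{j-1})$, and since $i\ge j$ and $i-1\ge j$ the inductive hypothesis applies to both terms on the right. Substituting, reindexing the second sum by $k\mapsto k+1$, and using Pascal's rule ${j-1\choose k}+{j-1\choose k-1}={j\choose k}$ (with the conventions ${j-1\choose -1}={j-1\choose j}=0$), I obtain
$$
q_i(V_j)=\sum_{k=0}^{j-1}(-1)^k{j-1\choose k}q_{i-k}+\sum_{k=1}^{j}(-1)^{k}{j-1\choose k-1}q_{i-k}=\sum_{k=0}^{j}(-1)^k{j\choose k}q_{i-k},
$$
which is the asserted identity; this closes the induction.

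\emph{The main obstacle} is the only non-formal point, namely the identification $\mathcal{I}_{V_{j-1}}\otimes\OO_{\Pi_j}\cong\mathcal{I}_{V_j/\Pi_j}$: one must argue that intersecting the possibly non-reduced and reducible scheme $V_{j-1}$ with a \emph{general} hyperplane commutes with passing to ideal sheaves, which is a Bertini-type transversality statement and is where the generality built into the $\Lambda_\bullet$ genuinely enters. By contrast, the required $h^1$-vanishing at every stage is handed to us directly by Theorem~\ref{theorem:weakly-exceptional-quotient-criterion}, and the remaining work is the elementary binomial manipulation above; note in particular that the stated bound $i\ge j+1$ is sharper than the bound $i\ge j$ needed for the recursion alone precisely so that the inductive formula can be applied to the term $q_{i-1}(V_{j-1})$.
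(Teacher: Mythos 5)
Your proof is correct: the hyperplane-restriction exact sequence for ideal sheaves, the $h^1$-vanishing from Theorem~\ref{theorem:weakly-exceptional-quotient-criterion}, and the induction with Pascal's rule fit together with the index ranges handled properly. The paper itself only refers to the proof of Lemma~27 of~\cite{Gokova}, which proceeds by essentially the same argument, so your write-up is in substance the paper's intended proof, spelled out (including the transversality point ensuring $\mathcal{I}_{V_{j-1}}\otimes\mathcal{O}_{\Pi_j}\cong\mathcal{I}_{V_j}$).
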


\begin{proof}
See the proof of \cite[Lemma~27]{Gokova}.
\end{proof}

Recall that $q_1=0$ since the representation $U$ is irreducible,
and $q_i=0$ for $2\le i\le 5$ by Lemma~\ref{lemma:splitting}.
Therefore, we have

\begin{corollary}\label{corollary:qivj-stupid}
If $n=9$, one has
$$
q_9-8q_8+28q_7-56q_6=q_9(V_8)\ge 0.
$$
\end{corollary}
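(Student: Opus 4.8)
The plan is to read off Corollary~\ref{corollary:qivj-stupid} as a direct specialization of Lemma~\ref{lemma:qivj}. First I would take $j=8$ and $i=9$; this is legitimate under the assumption $n=9$, since then $j=8\in\{1,\ldots,n\}$ and $i=9=j+1\ge j+1$, which are precisely the hypotheses of Lemma~\ref{lemma:qivj}. Applying it yields
$$
q_9(V_8)=q_9-{8\choose 1}q_8+{8\choose 2}q_7-{8\choose 3}q_6+{8\choose 4}q_5-{8\choose 5}q_4+{8\choose 6}q_3-{8\choose 7}q_2+{8\choose 8}q_1,
$$
a sum of $j+1=9$ terms, the last one being $(-1)^{8}q_{i-j}=q_1$.

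Next I would substitute the vanishing of the low-degree invariants. One has $q_1=0$ because $U$ is irreducible as a $G$-representation, so $\P^{11}$ carries no $\bar G$-invariant hyperplane; and $q_2=q_3=q_4=q_5=0$ because, by Lemma~\ref{lemma:splitting}, each of $\Sym^2(U^{\vee}),\ldots,\Sym^5(U^{\vee})$ is irreducible and hence contains no one-dimensional subrepresentation, i.\,e. $G$ has no semi-invariant of degree $2,3,4$ or $5$. Using in addition ${8\choose 1}=8$, ${8\choose 2}=28$ and ${8\choose 3}=56$, the identity above collapses to $q_9(V_8)=q_9-8q_8+28q_7-56q_6$. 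Finally, $q_9(V_8)=h^0(\OO_{\Pi_8}(9)\otimes\mathcal{I}_{V_8})$ is the dimension of a space of global sections of a coherent sheaf, hence nonnegative, which gives the claimed inequality.

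I do not anticipate any real obstacle here: the corollary is pure bookkeeping on top of Lemma~\ref{lemma:qivj} together with the semi-invariant data recorded in Lemma~\ref{lemma:splitting} and the irreducibility of $U$. The only points worth a moment's care are that the chosen index $i=9$ lies exactly on the boundary $i=j+1$ where Lemma~\ref{lemma:qivj} is still valid, and that one should not drop the trailing $q_1$ term before invoking irreducibility to kill it.
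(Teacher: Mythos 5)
Your proposal is correct and matches the paper's argument: the paper derives this corollary exactly by specializing Lemma~\ref{lemma:qivj} to $i=9$, $j=8$, killing the terms $q_1,\dots,q_5$ via irreducibility of $U$ and Lemma~\ref{lemma:splitting}, and using that $q_9(V_8)$ is the dimension $h^0(\mathcal{O}_{\Pi_8}(9)\otimes\mathcal{I}_{V_8})\ge 0$.
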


Playing with the numbers $q_i(V_j)$, we obtain

\begin{lemma}[{cf.~\cite[Lemma~35]{Gokova}}]
\label{lemma:qivj-curve} One has
$${12\choose n}-\frac{(n+1)d}{2}>q_n(V_{n-1})\geqslant {12\choose n}-nd-1.$$
\end{lemma}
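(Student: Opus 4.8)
The plan is to express $q_n(V_{n-1})$ through the genus of the curve $C:=V_{n-1}$, thereby reducing the asserted double inequality to a two-sided bound on that genus, and then to extract that bound from the hypothesis that $V$ is of Fano type. Note first that $C$ is the intersection of $V$ with the general linear subspace $\Pi_{n-1}\cong\P^{12-n}$ of codimension $n-1$, so it is a curve of degree $d$. Since $V$ is normal, $\Sing(V)$ has codimension at least $2$ in $V$, so $C$ misses $\Sing(V)$; applying Bertini and Seidenberg's theorem along the chain $V=V_0\supset V_1\supset\dots\supset V_{n-1}=C$, each $V_j$ is normal and $\mathbb{Q}$-Gorenstein (the latter because $V$ is of Fano type), $V_{j+1}$ is a very ample Cartier divisor on $V_j$, and $C$ is a smooth irreducible projective curve.

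Next I would compute $q_n(V_{n-1})$ cohomologically. Theorem~\ref{theorem:weakly-exceptional-quotient-criterion} applied with $j=n-1$ gives $h^{i}(\OO_{\Pi_{n-1}}(m)\otimes\mathcal{I}_{V_{n-1}})=0$ for all $i\ge 1$ and $m\ge n-1$; in particular $h^1(\mathcal{I}_C(n))=h^2(\mathcal{I}_C(n))=0$. From $0\to\mathcal{I}_C(n)\to\OO_{\P^{12-n}}(n)\to\OO_C(n)\to 0$, the vanishing of $h^1(\mathcal{I}_C(n))$ gives $q_n(V_{n-1})=h^0(\OO_{\P^{12-n}}(n))-h^0(\OO_C(n))={12\choose n}-h^0(\OO_C(n))$, while the vanishing of $h^2(\mathcal{I}_C(n))$, combined with $H^1(\OO_{\P^{12-n}}(n))=H^2(\OO_{\P^{12-n}}(n))=0$ (valid since $n\le 9$), forces $H^1(\OO_C(n))\cong H^2(\mathcal{I}_C(n))=0$. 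Hence Riemann--Roch on the smooth curve $C$ yields $h^0(\OO_C(n))=\chi(\OO_C(n))=nd+1-g(C)$, so that
$$q_n(V_{n-1})={12\choose n}-nd-1+g(C),$$
and the assertion becomes equivalent to the single double inequality $0\le g(C)<\frac{(n-1)d}{2}+1$.

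The left-hand bound $g(C)\ge 0$ is automatic for a smooth irreducible projective curve. For the right-hand bound I would use adjunction along the chain above: $K_{V_{j+1}}=(K_{V_j}+H)|_{V_{j+1}}$ for each $j$, so $K_C=(K_V+(n-1)H)|_{C}$, and taking degrees $2g(C)-2=K_V\cdot H^{n-1}+(n-1)d$. Writing the Fano type structure as $-(K_V+\Delta_V)$ ample for an effective $\mathbb{Q}$-divisor $\Delta_V$ with $(V,\Delta_V)$ Kawamata log terminal, and using that $H$ is ample, one gets $-K_V\cdot H^{n-1}=-(K_V+\Delta_V)\cdot H^{n-1}+\Delta_V\cdot H^{n-1}>0$, hence $K_V\cdot H^{n-1}<0$ and $2g(C)-2<(n-1)d$, i.e. $g(C)<\frac{(n-1)d}{2}+1$, as required.

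The step I expect to require the most care is keeping this reduction honest for the possibly singular $V$: one must check that the cohomological statements of Theorem~\ref{theorem:weakly-exceptional-quotient-criterion}, which are phrased for the $V_j$, genuinely force $h^1(\OO_C(n))=0$, and that the adjunction chain is legitimate — that $C$ avoids $\Sing(V)$, that the intermediate sections stay normal so the $\mathbb{Q}$-Gorenstein adjunction formula applies, and that $K_V\cdot H^{n-1}$ really is the intersection number entering it. Granting that, the whole argument reduces to the single geometric input $K_V\cdot H^{n-1}<0$ coming from Fano type, with the rest being Riemann--Roch bookkeeping.
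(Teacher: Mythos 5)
Your proposal is correct and follows essentially the same route as the paper: compute $q_n(V_{n-1})$ by Riemann--Roch on the general curve section and bound its genus via adjunction together with $K_V\cdot H^{n-1}<0$, which comes from the Fano type condition (the paper phrases this as $-K_V$ being big). The only cosmetic difference is that you get $h^1(\mathcal{O}_{V_{n-1}}(n))=0$ from the vanishing $h^2(\mathcal{O}_{\Pi_{n-1}}(n)\otimes\mathcal{I}_{V_{n-1}})=0$ supplied by Theorem~\ref{theorem:weakly-exceptional-quotient-criterion}, whereas the paper deduces it from non-speciality of $nH_{n-1}$ (its degree exceeds $2g-2$), both of which are valid.
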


\begin{proof}
Recall that the variety $V_{n-1}\subset\Pi_{n-1}\cong\P^{11-n+1}$
is a smooth curve of degree~$d$, since~$V$ is normal. Recall also
$V_{n-1}$ is irreducible, since $V$ is irreducible. Let $g$ be the
genus of the curve~$V_{n-1}$. Then it follows from the adjunction
formula that
$$
2g-2=(K_{V}+(n-1))\cdot H^{n-1}=K_{V}\cdot H^{n-1}+(n-1)d<(n-1)d,
$$
since $K_{V}\cdot H^{n-1}<0$, because $-K_{V}$ is big. On the
other hand, we have
$$
q_m(V_{n-1})={11-n+1+m\choose
m}-h^{0}\Big(\mathcal{O}_{V_{n-1}}\big(mH_{n-1}\big)\Big)%
$$
for every $m\geqslant n$, because
$h^{1}(\mathcal{O}_{\Pi_{n-1}}(m)\otimes \mathcal{I}_{V_{n-1}})=0$
for every $m\geqslant n-1$. Since $2g-2<nd$, the divisor
$nH_{n-1}$ is non-special. Therefore, it follows from the
Riemann--Roch theorem that
$$
q_n(V_{n-1})={12\choose n}-nd+g-1,%
$$
which implies the required inequalities, since $2g-2<(n-1)d$ and
$g\geqslant 0$.
\end{proof}

Combining Lemma~\ref{lemma:qivj-curve} and
Corollary~\ref{corollary:qivj-stupid}, we obtain

\begin{corollary}\label{corollary:curve}
If $n=9$, then
$$
\max\big(0, 219-9d\big)\le q_9-8q_8+28q_7-56q_6 <220-5d.
$$
\end{corollary}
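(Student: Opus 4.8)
The plan is simply to specialize the two preceding results to $n=9$ and paste them together. First I would record the elementary binomial evaluation $\binom{12}{9}=\binom{12}{3}=220$, and note that for $n=9$ the subvariety $V_{n-1}=V_8$ in Lemma~\ref{lemma:qivj-curve} is the smooth irreducible curve of degree $d$ sitting inside $\Pi_8\cong\P^{3}$. Feeding $n=9$ into Lemma~\ref{lemma:qivj-curve} then yields
$$
220-\frac{10d}{2}>q_9(V_8)\ge 220-9d-1,
$$
i.e. $220-5d>q_9(V_8)\ge 219-9d$ (the left inequality being strict because $-K_V$ is big, so $2g-2<(n-1)d$ strictly).

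Next I would invoke Corollary~\ref{corollary:qivj-stupid}, which for $n=9$ identifies the alternating combination $q_9-8q_8+28q_7-56q_6$ with $q_9(V_8)$ and asserts $q_9(V_8)\ge 0$. Substituting this identification into the chain of inequalities above turns it into
$$
220-5d>q_9-8q_8+28q_7-56q_6\ge 219-9d,
$$
and combining the last inequality with the nonnegativity $q_9-8q_8+28q_7-56q_6\ge 0$ from Corollary~\ref{corollary:qivj-stupid} produces the lower bound $\max\big(0,219-9d\big)$. This is exactly the assertion of the corollary.

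Since everything here reduces to an arithmetic substitution and appeals to results already in hand, I do not anticipate any genuine obstacle. The only points requiring a moment of care are the binomial evaluation $\binom{12}{9}=220$ and the codimension bookkeeping, namely checking that for $n=9$ the general linear section $V_{n-1}=V_8\subset\Pi_8\cong\P^3$ is indeed the curve to which Lemma~\ref{lemma:qivj-curve} applies, so that the two estimates may be legitimately combined.
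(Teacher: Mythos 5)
Your proposal is correct and is exactly the paper's argument: the corollary is obtained by specializing Lemma~\ref{lemma:qivj-curve} to $n=9$ (using $\binom{12}{9}=220$) and combining it with the identification $q_9-8q_8+28q_7-56q_6=q_9(V_8)\geqslant 0$ from Corollary~\ref{corollary:qivj-stupid}. The codimension bookkeeping ($V_8\subset\Pi_8\cong\P^3$ a curve of degree $d$) matches the paper as well.
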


As a by-product of Corollary~\ref{corollary:curve}, we get

\begin{corollary}\label{corollary:degree-bounds}
If $n=9$, then $1\le d\le 43$.
\end{corollary}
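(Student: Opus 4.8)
The plan is to obtain both inequalities as an immediate arithmetic consequence of Corollary~\ref{corollary:curve}; no new geometric input is required. First I would dispose of the lower bound: since $V\subset\P^{11}$ is a nonempty irreducible projective variety of dimension $n=9$, its degree $d=H^n$ is a positive integer, so $d\ge 1$.

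For the upper bound I would exploit the non-negativity of the middle term appearing in Corollary~\ref{corollary:curve}. By Corollary~\ref{corollary:qivj-stupid}, when $n=9$ one has
\[
q_9-8q_8+28q_7-56q_6=q_9(V_8)=h^0\big(\OO_{\Pi_8}(9)\otimes\mathcal{I}_{V_8}\big)\ge 0,
\]
the right-hand side being the dimension of a space of sections. Combining this with the strict upper bound supplied by Corollary~\ref{corollary:curve},
\[
0\le q_9-8q_8+28q_7-56q_6<220-5d,
\]
we deduce $220-5d>0$, i.e. $5d<220$; as $d$ is an integer this forces $d\le 43$.

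I do not anticipate any obstacle here: the substantive work is already contained in Corollary~\ref{corollary:curve}, and the present statement is simply the remark that a non-negative quantity lying strictly below $220-5d$ confines $d$ to the range $1\le d\le 43$. If a sharper bound were wanted later, one could instead feed the lower estimate $\max(0,219-9d)$ from Corollary~\ref{corollary:curve}, together with finer information on the individual numbers $q_i$, into the analysis; but for the case-by-case treatment that follows, the cruder bound $d\le 43$ is all that is needed.
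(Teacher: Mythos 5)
Your proposal is correct and matches the paper's (implicit) argument: the paper derives the bound as an immediate by-product of Corollary~\ref{corollary:curve}, exactly as you do, since the middle quantity there is nonnegative (by the $\max(0,219-9d)$ lower bound, equivalently Corollary~\ref{corollary:qivj-stupid}), forcing $220-5d>0$ and hence $d\le 43$, while $d\ge 1$ holds trivially for a nonempty variety.
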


The above restrictions reduce the problem to a combinatorial
question of finding all polynomials $H_V$ of degree $n$ with a
leading coefficient~\mbox{$d/n!$}, such that $h_m=H_V(m)\in\Sigma_m$ for
sufficiently many $m\geqslant 1$, and such that the numbers $h_m$,
$q_m=h^0(\OO_{\P^{11}}(m))-h_m$ and $d$ satisfy the conditions arising
from
Corollaries~\ref{corollary:curve}
and~\ref{corollary:degree-bounds}. This can be
done in a straighforward way, although the number of cases to be
considered is so large that
it requires some checks to be done by a computer.
Doing this, we get the following
facts which we leave without proofs.

\begin{lemma}\label{lemma:python-8}
There are no polynomials $\mathrm{H}(m)$ of degree $n\le 8$ such
that the values $h_m=\mathrm{H}(m)$ are in $\Sigma_m$ for $1\le
m\le 12$.
\end{lemma}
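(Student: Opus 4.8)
\textbf{Proof proposal for Lemma~\ref{lemma:python-8}.}
The plan is to run a finite, computer-assisted enumeration over all admissible shapes of the Hilbert polynomial $\mathrm{H}_V$ and show that each possibility fails the membership constraints $\mathrm{H}_V(m)\in\Sigma_m$. First I would fix $n\le 8$ and recall that $\mathrm{H}_V$ is a polynomial of degree exactly $n$ with leading coefficient $d/n!$, where $d=\deg(V)$ is a positive integer; moreover $\mathrm{H}_V(0)=1$ since $V$ is projectively normal and connected (indeed $h^0(\OO_V)=1$ and all higher cohomology of $\OO_V(m)$ vanishes for $m\ge 0$, so $\chi(\OO_V)=1$). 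Since $V\subset\P^{11}$ is nondegenerate we also have $\mathrm{H}_V(1)=12$, forcing $q_1=0$ in accordance with irreducibility of $U$; and by Lemma~\ref{lemma:splitting} we have $q_m=0$, i.e. $\mathrm{H}_V(m)=\binom{11+m}{m}$, for $2\le m\le 5$. Thus $\mathrm{H}_V$ is already pinned down at the integer points $m=0,1,\dots,5$ (six conditions), which for $n\le 5$ overdetermines it and for $n=6,7,8$ leaves only the top $n-5$ coefficients free.

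Next I would bound the remaining freedom so the search is finite. For $n\le 5$ the polynomial $\mathrm{H}_V$ is completely determined by its values at $0,\dots,5$, and one checks directly that the resulting polynomial either fails $\mathrm{H}_V(6)\in\Sigma_6$ or already fails $\chi$-positivity/integrality at some $m\le 12$; in any case there are finitely many (in fact at most one) candidates per $n$. For $n\in\{6,7,8\}$, I would use the constraint that the finite differences of $\mathrm{H}_V$ encode the (non-negative) Hilbert function values: writing $\mathrm{H}_V$ in the binomial basis $\binom{m+n}{n},\binom{m+n-1}{n-1},\dots$, the top coefficient is $d$ with $1\le d$, and the lower coefficients are constrained because $\mathrm{H}_V(m)\le\binom{11+m}{m}$ for all $m\ge 1$ (as $q_m\ge 0$) and $\mathrm{H}_V(m)\ge$ the minimum element of $\Sigma_m$ for $m$ where $\Sigma_m$ is nonempty. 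Combined with the known values at $m=2,\dots,5$, this forces $d$ into a bounded range and then fixes the finitely many integer-coefficient polynomials to test. Concretely: having matched the six values at $m=0,\dots,5$, the polynomial is $\binom{11+m}{m}$ plus $d$ times the degree-$n$ monic-up-to-scalar correction vanishing at $0,\dots,5$; requiring $\mathrm{H}_V(m)\in\Sigma_m$ for $m=6,\dots,12$ leaves only a short list of $d$ to examine for each $n$.

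Finally I would run the membership check: for each of the finitely many candidate polynomials $\mathrm{H}(m)$ I evaluate $\mathrm{H}(6),\dots,\mathrm{H}(12)$ and verify that at least one of these integers is \emph{not} a partial sum in the corresponding $\Sigma_m$, where $\Sigma_m$ is read off from $\Delta_m$ as recorded in Lemma~\ref{lemma:splitting}. Since $\Sigma_6,\dots,\Sigma_{12}$ are explicit finite sets of integers, this is a mechanical finite computation. The output is that no candidate survives, which is exactly the assertion.

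\textbf{Main obstacle.} The genuinely delicate point is not any single inequality but the bookkeeping: one must be sure the search space has been correctly and \emph{exhaustively} bounded, i.e. that the constraints ``$\mathrm{H}_V(m)\in\Sigma_m$ for $m\le 12$'' together with $0<d$, integrality of all coefficients, and the pinned values at $m=0,\dots,5$ really do cut out a finite, explicitly enumerable list with no cases slipping through. Because the sets $\Sigma_m$ grow combinatorially with the number of irreducible summands of $\Sym^m(U^\vee)$, the number of partial sums — and hence the number of cases to rule out — becomes large enough that the verification is best delegated to a computer, which is why the lemma is stated without a hand proof. Once the enumeration is set up correctly, each individual exclusion is immediate.
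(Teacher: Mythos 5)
Your overall plan --- reduce to a finite list of candidate polynomials and check membership in the explicit sets $\Sigma_m$ by machine --- is exactly what the paper does: the authors state that this verification is a straightforward but large case-by-case computation and leave Lemma~\ref{lemma:python-8} without a written proof, so the enumeration itself is the whole content. However, your concrete description of the search space contains a genuine slip that would make the enumeration non-exhaustive if followed literally. The function ${11+m\choose m}$ has degree $11$ in $m$, so a polynomial of degree $n\le 8$ is not ``${11+m\choose m}$ plus $d$ times a degree-$n$ correction vanishing at $0,\dots,5$''; what matching the six values at $m=0,\dots,5$ actually leaves is the degree-$\le 5$ interpolant plus $m(m-1)\cdots(m-5)$ times an arbitrary polynomial of degree $n-6$, i.e. an $(n-5)$-parameter family: one free coefficient for $n=6$, but two for $n=7$ and three for $n=8$, not just the single parameter $d$. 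Enumerating only over $d$ would miss candidates for $n=7,8$ --- precisely the exhaustiveness issue you yourself single out as the main obstacle. The clean fix is simpler than your bounding scheme: a polynomial of degree $\le 8$ is determined by its values at the nine points $m=1,\dots,9$, each of which is constrained to the finite set $\Sigma_m$, so the candidate list is finite outright (of order a few hundred thousand interpolation problems); one then tests each interpolant at $m=10,11,12$ and discards the identically zero one, with no bound on $d$ or on lower coefficients required.

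A second, smaller point: you silently strengthen the hypotheses by imposing $\mathrm{H}(0)=1$, $\mathrm{H}(1)=12$ and $\mathrm{H}(m)={11+m\choose m}$ for $2\le m\le 5$. These are geometric facts about $V$ (projective normality and vanishing of higher cohomology, irreducibility of $U$, and $q_m=0$ for $m\le 5$), not consequences of the lemma's hypothesis $h_m\in\Sigma_m$; the lemma as stated also allows $h_m=0$ at those places and says nothing about $m=0$. Your restricted version suffices for the application in the proof of Theorem~\ref{theorem:Suzuki}, but if you want the statement as written you should simply let $h_m$ range over both elements of $\Sigma_m=\{0,\dim\Sym^m(U^{\vee})\}$ for $m\le 5$, which costs only a factor of $2^5$ in the enumeration.
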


\begin{lemma}\label{lemma:python-9}
There does not exist a polynomial $\mathrm{H}(m)$ of degree $n=9$
with a leading coefficient~$d/n!$ with $d\in\Z$ and $1\le
d\leqslant 43$, such that the values $h_m=\mathrm{H}(m)$ are in
$\Sigma_m$ for $1\le m\le 12$, and the numbers $h_m$ and
$q_m={11+m\choose m}-h_m$ satisfy the bounds of
Corollary~\ref{corollary:curve}.
\end{lemma}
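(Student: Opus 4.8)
The plan is to reduce the statement to a finite search and then describe how that search is organized, since a truly self-contained verification is what the paper defers to a computer. First I would record what the polynomial $\mathrm{H}(m)=H_V(m)$ must satisfy: it has degree $9$ and leading coefficient $d/9!$ with $d\in\Z$ and $1\le d\le 43$ by Corollary~\ref{corollary:degree-bounds}; it is an honest Hilbert polynomial, so $\mathrm{H}(0)=1$ and $\mathrm{H}(m)\in\Z$ for all integers $m$ (equivalently, $\mathrm{H}$ lies in the $\Z$-span of the binomials $\binom{m+i}{i}$); we have $q_m=\binom{11+m}{m}-\mathrm{H}(m)=0$ for $1\le m\le5$ by Lemma~\ref{lemma:splitting} together with $q_1=0$ from irreducibility; and for every $m$ with $6\le m\le12$ the value $h_m=\mathrm{H}(m)$ must lie in $\Sigma_m$, the set of partial sums of $\Delta_m$ as tabulated in Lemma~\ref{lemma:splitting}. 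Finally the pair $(d, \{h_m\})$ must satisfy the two-sided bound of Corollary~\ref{corollary:curve}, namely $\max(0,219-9d)\le q_9-8q_8+28q_7-56q_6<220-5d$.

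Next I would turn these constraints into a finite enumeration. The six conditions $\mathrm{H}(0)=1$ and $q_m=0$ for $1\le m\le5$ are six linear equations on the ten coefficients of $\mathrm{H}$ (in the binomial basis), so the space of admissible $\mathrm{H}$ is an affine space of dimension $4$ over $\Q$; after imposing that the leading coefficient is $d/9!$ it becomes, for each fixed $d$, a $3$-parameter family. I would parametrize the remaining freedom by, say, the three values $h_6=\mathrm{H}(6)$, $h_7=\mathrm{H}(7)$, $h_8=\mathrm{H}(8)$ (Lagrange interpolation through the known data at $0,1,\dots,5$ and these three points determines a degree-$8$ polynomial; the degree-$9$ term is then pinned down by $d$, and I would use $h_9$ or the integrality of $\mathrm{H}$ to keep only the cases giving an honest Hilbert polynomial). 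Each of $h_6,h_7,h_8$ is forced to lie in the explicitly finite sets $\Sigma_6,\Sigma_7,\Sigma_8$ coming from $\Delta_6=[364,12012]$ (so $\Sigma_6$ has $4$ elements), $\Delta_7=[4368,27456]$ (so $\Sigma_7$ has $4$ elements), and $\Delta_8=[1365,4290,27027,42900]$ (so $\Sigma_8$ has $16$ elements). Combined with $1\le d\le43$ this already cuts the search to at most $43\cdot4\cdot4\cdot16$ candidate polynomials, a few thousand cases.

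For each surviving candidate I would then apply the remaining filters in increasing order of cost: check that $\mathrm{H}$ takes integer values at all integers (equivalently, has integral coefficients in the binomial basis), check the Corollary~\ref{corollary:curve} inequality on $q_9-8q_8+28q_7-56q_6$, and finally check the membership conditions $h_m=\mathrm{H}(m)\in\Sigma_m$ for the remaining values $m=9,10,11,12$, where $\Sigma_9,\Sigma_{10},\Sigma_{11},\Sigma_{12}$ are again finite sets read off from the displayed expressions for $\Delta_9,\dots,\Delta_{12}$ in Lemma~\ref{lemma:splitting}. The claim is that no candidate survives all of these. The main obstacle, and the reason the paper leaves it ``without proof,'' is purely the combinatorial bulk: although each individual test is elementary arithmetic, the number of partial sums $|\Sigma_m|$ grows with the number of irreducible constituents (for $m=12$ there are on the order of $2^{20}$ subsets of a $21$-part multiset to consider), so verifying emptiness of the solution set by hand is infeasible and a computer check is used. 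I would note that the filters are deliberately ordered so that the cheap tests ($d$-range, integrality, the degree-$\le8$ Hilbert-polynomial membership, and the numerical inequality of Corollary~\ref{corollary:curve}) eliminate the overwhelming majority of cases before one ever has to enumerate the large sets $\Sigma_{11}$ and $\Sigma_{12}$, which keeps the computation well within reach; the conclusion is that the list of admissible $\mathrm{H}$ is empty, proving Lemma~\ref{lemma:python-9}.
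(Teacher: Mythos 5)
The paper gives no written argument for this lemma at all---it is explicitly ``left without proof'' as a computer check---so the only meaningful comparison is with the finite enumeration the paper alludes to, and your overall strategy (parametrize the candidate polynomials by finitely many interpolation data, filter by cheap tests, conclude the list is empty) is the same in spirit. There is, however, a genuine mismatch between the family you enumerate and the statement you are asked to prove. The lemma quantifies over \emph{all} degree-$9$ polynomials with leading coefficient $d/9!$, $1\le d\le 43$, whose values $h_m$ lie in $\Sigma_m$ for $1\le m\le 12$ and which satisfy the bounds of Corollary~\ref{corollary:curve}---nothing more. You additionally hard-code $\mathrm{H}(0)=1$, integrality of $\mathrm{H}$ at all integers, and $q_m=0$ for $1\le m\le 5$. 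These are consequences of the geometric situation (Fano type of $V$, irreducibility of $U$ and of $\Sym^m(U^{\vee})$ for $m\le 5$ together with nondegeneracy of $V$), not of the lemma's hypotheses: since $0\in\Sigma_m$, the hypotheses of the lemma also allow $h_m=0$ for $m\le 5$, and they say nothing about $\mathrm{H}(0)$ or about integer values outside $m\in\{1,\ldots,12\}$. Discarding such candidates a priori is exactly the kind of case your search is supposed to rule out, so as described your computation would establish only a weaker statement (one that does suffice for Theorem~\ref{theorem:Suzuki}, where the actual Hilbert polynomial satisfies the extra conditions, but not the lemma verbatim).

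The repair is cheap and keeps the same method: parametrize instead by $d$ and the nine values $h_1,\ldots,h_9$ with $h_m\in\Sigma_m$ (for $m\le 5$ this means $h_m\in\{0,\dim\Sym^m(U^{\vee})\}$, i.e. two choices each), which together with the leading coefficient $d/9!$ determine $\mathrm{H}$ uniquely; then test $h_{10},h_{11},h_{12}\in\Sigma_m$ and the inequality of Corollary~\ref{corollary:curve}. This is at most $43\cdot 2^5\cdot|\Sigma_6||\Sigma_7||\Sigma_8||\Sigma_9|$, on the order of $10^7$ cases, still trivially machine-checkable, and it covers the full search space of the lemma; any auxiliary filters such as integrality should then be used only if they are justified from the stated hypotheses, not assumed. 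Finally, be aware that, exactly as in the paper, the emptiness of the candidate list is asserted rather than exhibited, so what you have written is a specification of the verification rather than a verification; that is acceptable here, but the specification must match the statement being verified.
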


This completes the proof of Theorem~\ref{theorem:Suzuki}.

\appendix
\section{Semi-invariants of low degrees}
\label{section:semi-invariants}

In this section we list the results
of the computations of the low degree semi-invariants
of the irreducible representations of the relevant groups
(communicated to us by A.\,Zavarnitsyn).
The GAP software (see \cite{GAP}) was used to carry
them out.

The tables below contain the information about the representations
of stem extensions of sporadic simple groups with the least possible
value of $\mu(U)=d(U)/\dim(U)$ among all irreducible representations
$U$ of the corresponding group $G$, where $d(U)$ is a minimal degree of
a semi-invariant\footnote{Actually, for stem extensions of simple
groups all semi-invariants are invariants.}
of $G$ for the $G$-representation $U$.
In each case we
list the values of $d(U)$ and $\dim(U)$ for which the minimum of
$\mu(U)$ is attained (except for the groups $2.\HaJ$ and $6.\Suz$
where we list slightly different information, see below).
It appears a posteriori that for each of our groups
the value of $\dim(U)$~--- and thus also of $d(U)$~--- giving the minimal
value of~$\mu(U)$ is unique.

\medskip
Mathieu groups.

\begin{tabular}{|c||c||c|c||c|c|c|c|c|c||c||c|}
\hline
$G$ &$\MM_{11}$& $\MM_{12}$& $2.\MM_{12}$&
$\MM_{22}$& $2.\MM_{22}$& $3.\MM_{22}$& $4.\MM_{22}$& $6.\MM_{22}$&
$12.\MM_{22}$&
$\MM_{23}$&  $\MM_{24}$\\
\hline
$d(U)$ & $4$  & $3$  & $6$  & $2$  & $4$  & $3$  & $4$  & $6$  & $12$
& $2$  & $4$\\
\hline
$\dim(U)$ & $10$ & $16$ & $10$ & $21$ & $10$ & $21$ & $56$ & $66$ & $120$
& $22$ & $45$\\
\hline
\end{tabular}

\medskip
Conway groups.

\begin{tabular}{|c||c|c||c||c|}
\hline
$G$ & $\Co_1$& $2.\Co_1$& $\Co_2$& $\Co_3$\\
\hline
$d(U)$ & $2$   & $2$  & $2$  & $2$\\
\hline
$\dim(U)$ & $276$ & $24$ & $23$ & $23$\\
\hline
\end{tabular}

\medskip
Leech lattice groups except Conway groups (for the group $2.\HaJ$
the $6$-dimesional representations are ignored, and for the group
$6.\Suz$ the $12$-dimensional representations are ignored when
computing $d$ and $n$~--- these actually lead to
weakly-exceptional singularities, and the values of~$d$ and~$n$
with $d/n\ge 1$).

\begin{tabular}{|c||c|c||c|c||c|c||c|c|c|c|}
\hline
$G$ &$\HS$ & $2.\HS$& $\HaJ$ & $2.\HaJ${}\footnotemark &
$\McL$& $3.\McL$& $\Suz$ & $2.\Suz$ &
$3.\Suz$ & $6.\Suz${}\footnotemark\\
\hline
$d(U)$ & $2$  & $2$  & $2$  & $4$ & $2$  & $3$
& $2$   & $4$   & $6$ & $6$\\
\hline
$\dim(U)$ & $22$ & $56$ & $14$ & $14$ & $22$ & $126$
& $143$ & $220$ & $78$ & $780$\\
\hline
\end{tabular}

\addtocounter{footnote}{-1}
\footnotetext{Without $6$-dimensional representations.}
\addtocounter{footnote}{1}
\footnotetext{Without $12$-dimensional representations.}

\medskip
Fischer groups.

\begin{tabular}{|c||c|c|c|c||c||c|c|}
\hline
$G$ & $\Fi_{22}$& $2.\Fi_{22}$& $3.\Fi_{22}$&
$6.\Fi_{22}$& $\Fi_{23}$& $\Fi_{24}'$& $3.\Fi_{24}'$\\
\hline
$d(U)$ & $2$  & $2$   & $3$   & $6$    & $2$   & $2$  & $3$\\
\hline
$\dim(U)$ & $78$ & $352$ & $351$ & $1728$ & $782$ & $8671$ & $783$\\
\hline
\end{tabular}

\medskip
Other Monster sections.

\begin{tabular}{|c||c||c||c||c|c||c|}
\hline
$G$ &$\He$ & $\HN$& $\Th$&
$\B$ & $2.\B$& $\M$\\
\hline
$d(U)$ & $3$  & $2$   & $2$   & $2$    & $2$     & $2$\\
\hline
$\dim(U)$ & $51$ & $133$ & $248$ & $4371$ & $96256$ & $196883$\\
\hline
\end{tabular}

\medskip
Tits group and pariahs.
\nopagebreak

\begin{tabular}{|c||c||c||c|c||c|c||c|c||c||c|}
\hline
$G$ &$\T$ & $\J_1$& $\ON$&
$3.\ON$& $\J_3$& $3.\J_3$& $\Ru$& $2.\Ru$& $\J_4$& $\Ly$\\
\hline
$d(U)$ & $6$  & $2$  & $4$     & $6$   & $3$  & $6$  & $4$   & $4$  & $4$
& $6$ \\
\hline
$\dim(U)$ & $26$ & $56$ & $13376$ & $342$ & $85$ & $18$ & $378$ & $28$ & $1333$
& $2480$ \\
\hline
\end{tabular}

\end{document}